\newcommand{\Z}{\mathbb Z}
\newcommand{\supp}{\operatorname{supp}}
\newcommand{\Shv}{\operatorname{Shv}}
\newcommand{\stabPr}{\mathcal P\mathrm r_{\mathrm{stab}}}
\newcommand{\inftyCats}{\widehat{\mathcal C\mathrm{at}}_\infty}
\DeclareMathOperator{\colimit}{\operatorname{colim}}
\DeclareMathOperator{\limit}{\operatorname{lim}}
\title{Compact sheaves on a locally compact space}
\author{Oscar Harr}
\date{\today}
\newtheorem{thm}{Theorem}[section]
\newtheorem{lem}[thm]{Lemma}
\newtheorem{prop}[thm]{Proposition}
\newtheorem{cor}[thm]{Corollary}
\theoremstyle{definition}
\newtheorem{defn}[thm]{Definition}
\newtheorem{rmk}[thm]{Remark}
\begin{document}

\maketitle

\begin{abstract}
  We describe the compact objects in the $\infty$-category of $\mathcal C$-valued sheaves $\Shv (X,\mathcal C)$ on a hypercomplete locally compact Hausdorff space $X$,
  for $\mathcal C$ a compactly generated stable $\infty$-category.
  When $X$ is a non-compact connected manifold and $\mathcal C$ is the unbounded derived category of a ring, our result recovers a result of Neeman.
  Furthermore, for $X$ as above and $\mathcal C$ a nontrivial compactly generated stable $\infty$-category, we show that $\Shv (X,\mathcal C)$ is compactly generated if and only if $X$ is totally disconnected.
\end{abstract}
The aim of this note is to clarify and expand on a point made by Neeman \cite{neeman2001}. Let $M$ be a non-compact connected manifold, and let $\Shv (M,\mathcal D(\Z ))$ denote the unbounded derived category of sheaves of abelian groups on $M$. Neeman shows that, up to equivalence, the only compact object in $\Shv (M,\mathcal D(\Z ))$ is the zero sheaf. This implies that $\Shv (M,\mathcal D(\Z ))$ is very far from compactly generated. Nevertheless, it follows from Lurie's covariant Verdier duality theorem \cite[Thm~5.5.5.1]{HA} that $\Shv (M,\mathcal D(\Z ))$ satisfies a related smallness condition: it is \emph{dualizable} in the symmetric monoidal $\infty$-category $\stabPr^{\otimes}$ of stable presentable $\infty$-categories,
which holds more generally if $M$ is replaced with any locally compact Hausdorff space $X$.
Although every compactly generated presentable stable $\infty$-category is dualizable \cite[Prop~D.7.2.3]{SAG}, Neeman's example thus shows that the converse is false.
The existence of this large and interesting class of stable presentable $\infty$-categories that are dualizable but not compactly generated forms part of the motivation behind Efimov's continuous extensions of localizing invariants \cite{efimov-ICM}, see also \cite{hoyois}.

This note is concerned with the following two questions about the $\infty$-category of $\mathcal C$-valued sheaves on a general locally compact Hausdorff space $X$, where $\mathcal C$ is some compactly generated stable $\infty$-category (e.g. the unbounded derived $\infty$-category of a ring or the $\infty$-category of spectra):
\begin{enumerate}[label=(\arabic*)]
\item How rare is it for $\Shv (X,\mathcal C)$ to be compactly generated?
\item How far is $\Shv (X,\mathcal C)$ from being compactly generated in general?
\end{enumerate}
With a relatively mild completeness assumption on $X$ (see Section~\ref{sec:hypercompleteness}), we answer question (2) by showing that a $\mathcal C$-valued sheaf $\mathscr F$ on $X$ is compact as an object of $\Shv (X,\mathcal C)$ if and only if it has compact support, compact stalks, and is locally constant (Theorem~\ref{thm:cpt-sheaves}). Thus if $X$ is for instance a CW complex, the subcategory of compact objects $\Shv (X,\mathcal C)^\omega$ remembers only the \emph{homotopy type} of the compact path components of $X$, and it is therefore impossible to reconstruct the entire sheaf category $\Shv (X,\mathcal C)$, or equivalently the homeomorphism type of $X$, from this information.

In his 2022 ICM talk, Efimov mentions that the $\infty$-category of $\mathcal D(R)$-valued sheaves on a locally compact Hausdorff space $X$ `is almost never compactly generated (unless $X$ is totally disconnected, like a Cantor set)' \cite[slide~13]{efimov-ICM}. As a corollary to our description of the compact objects of $\Shv (X,\mathcal C)$, we verify--modulo the same completeness assumption mentioned above--that indeed the \emph{only} locally compact Hausdorff spaces $X$ with $\Shv (X,\mathcal C)$ compactly generated, for some nontrivial $\mathcal C$, are the totally disconnected ones (Proposition~\ref{prop:when-cpt}), thereby answering question (1).

\subsection*{Notation and conventions}
Throughout this note, we use the theory of higher categories and higher algebra, an extensive textbook account of which can be found in the work of Lurie \cite{HTT,HA,SAG}. 
We will also make frequent use of the six-functor formalism for derived sheaves on locally compact Hausdorff spaces, as described classically by \cite{verdier1965dualite} and with general coefficients by \cite{volpe23}.

For convenience, we assume the existence of an uncountable Grothendieck universe $\mathcal U$ of \emph{small} sets and further Grothendieck universes
$\mathcal U'$ and $\mathcal U''$
of \emph{large} and \emph{very large} sets respectively, such that $\mathcal U\in\mathcal U'\in\mathcal U''$.
`Topological space' always implicitly refers to a small topological space, and similarly with `spectrum'. On the other hand, an `$\infty$-category' is an `$(\infty ,1)$-category' is a quasicategory, which unless otherwise stated is large. We let $\inftyCats$ denote the very large $\infty$-category of (large) $\infty$-categories.

Because we are dealing with sheaves on topological spaces, we deem it best to make a clear distinction between actual topological spaces on the one hand, and on the other hand the objects of the $\infty$-category $\mathcal S$ of `spaces' in the sense of Lurie. Following the convention suggested in \cite{cesnavicius-scholze}, we will refer to the latter as \emph{anima}.

Given an $\infty$-category $\mathcal C$, we let $\mathcal C^{\omega}\subseteq\mathcal C$ denote the subcategory spanned by the compact objects. Recall that an object $C\in\mathcal C$ is said to be \emph{compact} if the presheaf of large anima $D\mapsto\operatorname{Map}_{\mathcal C}(C,D)$ preserves small filtered colimits.

\subsection*{Acknowledgements} 
I was partially supported by the Danish National Research Foundation
through the Copenhagen Centre for Geometry and Topology (DRNF151).
I am grateful to Marius Verner Bach Nielsen for comments on the draft,
and to Jesper Grodal and Maxime Ramzi for valuable discussions about the arguments appearing in this note,
and to the latter for pointing out that the original version of Lemma~\ref{lem:small-compare} was false
in the generality in which I had stated it.

\section{$\mathcal C$-hypercomplete spaces}
\label{sec:hypercompleteness}
Given an $\infty$-category $\mathcal C$ and a topological space $X$, we let $\Shv (X,\mathcal C)$ denote the $\infty$-category of $\mathcal C$-valued sheaves on $X$ in the sense of Lurie \cite{HTT}.
That is, $\Shv (X,\mathcal C)$ is the full subcategory of the presheaf $\infty$-category $\operatorname{Fun}(\operatorname{Open}(X)^{\text{op}},\mathcal C)$ consisting of
presheaves $\mathscr F$ satisfying the \emph{sheaf condition}: for any open set $U\subseteq X$ and any open cover $\lbrace U_i\to U\rbrace_{i\in I}$, the canonical map
$$
\mathscr F(U)\to\limit_V\mathscr F(V)
$$
is an equivalence, where $V$ ranges over open sets $V\subseteq U_i\subseteq X$, $i\in I$, considered as a poset under inclusion.
When $\mathcal C = \mathcal S$ is the $\infty$-category of anima, we will abbreviate $\Shv (X) = \Shv (X,\mathcal S)$.

\begin{rmk}
  When $\mathcal C = \mathcal D(R)$ is the unbounded derived $\infty$-category of a ring, the $\infty$-category $\Shv (X,\mathcal D(R))$ is related to, but generally not the same as, the derived $\infty$-category $\mathcal D(\Shv (X,R))$ of the ordinary category of sheaves of $R$-modules on $X$, which is the object studied (via its homotopy category) by Neeman \cite{neeman2001}. However, they do coincide under the completeness assumption that we will impose on $X$ below, see \cite[Prop~7.1]{scholze-6FF}. Since this completeness assumption is verified when $X$ is a topological manifold, our results include those of Neeman.
\end{rmk}

We are interested in topological spaces satisfying the following condition:
\begin{defn}\label{defn:hypercomplete}
  A topological space $X$ is \emph{$\mathcal C$-hypercomplete} if the stalk functors $x^*\colon\Shv (X,\mathcal C)\to\mathcal C$ are jointly conservative for $x$ ranging over the points of $X$.
\end{defn}
The reason for our choice of terminology is that $X$ is $\mathcal S$-hypercomplete if and only if the $0$-localic $\infty$-topos $\Shv (X)$ has enough points, which is equivalent to $\Shv (X)$ being hypercomplete as an $\infty$-topos by Claim (6) in \cite[\S~6.5.4]{HTT}. (This is \emph{not} true for arbitrary $\infty$-topoi, i.e. there are hypercomplete $\infty$-topoi that do not have enough points.)
This subtlety, whereby a morphism of sheaves may fail to be an equivalence even though it is so on all stalks, does not occur for non-derived sheaves: the $1$-topos $\Shv (X,\mathcal S_{\leq 0})$ of sheaves of sets on a topological space $X$ \emph{always} has enough points.
We refer to \cite[\S~6.5.4]{HTT} for a discussion of why it is often preferable to work with non-hypercomplete sheaves, rather than, say, imposing hypercompleteness by replacing $\Shv (X)$ with its hypercompletion $\Shv (X)^\wedge$.

\medskip

Several classes of topological spaces are known to be $\mathcal S$-hypercomplete, and hence also $\mathcal C$-hypercomplete for any compactly generated $\infty$-category $\mathcal C$.\footnote{Indeed, for any such $\mathcal C$ there is a conservative functor
  $$
  \Shv (X,\mathcal C)\to
  \prod_{C\in\mathcal C^\omega}\Shv (X)
  $$
  given informally by mapping $\mathscr F$ to $(\mathscr F_C)_{C\in\mathcal C^\omega}$, where
  $\mathscr F_C = \operatorname{Map}_{\mathcal C}(C,{-})\circ\mathscr F$, which is a sheaf since $\operatorname{Map}_{\mathcal C}(C,{-})$ preserves limits. Also, since $C$ is compact, we have a canonical equivalence $(\mathscr F_C)_x\simeq\operatorname{Map}_{\mathcal C}(C,\mathscr F_x)$ natural in $\mathscr F$ for each $x\in X$, and it follows that if $X$ is $\mathcal S$-hypercomplete then it is also $\mathcal C$-hypercomplete.}
Although only the first two are relevant for this note, here is a list of some classes of topological spaces that have this property:
\begin{itemize}
\item paracompact spaces that are locally of covering dimension $\leq n$ for some fixed $n$ \cite[Cor~7.2.1.12]{HTT},
\item arbitrary CW complexes \cite{hoyois-cw}, 
\item  finite-dimensional Heyting spaces \cite[Rem~7.2.4.18]{HTT}, and
\item Alexandroff spaces, since the $\infty$-topos of sheaves associated to an Alexandroff space is equivalent to a presheaf $\infty$-topos.
\end{itemize}
\section{When is a sheaf compact?}\label{sec:when-compact}
Let $\mathcal C$ be a compactly generated stable $\infty$-category,
e.g. the unbounded derived category $\mathcal D(R)$ of a ring $R$ or
the $\infty$-category of spectra $\mathrm{Sp}$.
Given a sheaf $\mathscr F\in\Shv(X,\mathcal C)$, we define the \emph{support} of $\mathscr F$ by
$$
\supp\mathscr F = \lbrace 
x\in X\mid \mathscr F_x\not\simeq 0
\rbrace\subseteq X.
$$
As in \cite{neeman2001}, our study of the compact objects of $\Shv(X,\mathcal C)$ proceeds from an analysis of their supports.
By slightly adapting the proof of \cite[Lem~1.4]{neeman2001},
we get the following description of the support of a compact sheaf:
\begin{lem}\label{lem:cpt-supp}
  Let $X$ be a $\mathcal C$-hypercomplete
  locally compact Hausdorff space and let $\mathscr F\in\Shv(X,\mathcal C)^\omega$. Then the support $\supp\mathscr F$ is compact.
\end{lem}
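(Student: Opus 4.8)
The plan is to argue by contradiction: suppose $\mathscr F\in\Shv(X,\mathcal C)^\omega$ has non-compact support, and build an explicit filtered colimit that $\operatorname{Map}_{\Shv(X,\mathcal C)}(\mathscr F,-)$ fails to commute with. Since $X$ is locally compact Hausdorff, non-compactness of $\supp\mathscr F$ gives an exhaustion of $X$ by open sets $U_1\subseteq U_2\subseteq\cdots$ (or a suitable filtered diagram of relatively compact opens, depending on whether $X$ is $\sigma$-compact) such that $\bigcup_i U_i = X$ but each $U_i$ misses some point of $\supp\mathscr F$; concretely, I would pick points $x_i\in\supp\mathscr F\setminus \overline{U_i}$. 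The idea, following Neeman's Lemma~1.4, is to test compactness against the diagram $i\mapsto j_{i!}j_i^*\mathscr F$, where $j_i\colon U_i\hookrightarrow X$ is the inclusion and $j_{i!}$ is extension by zero from the six-functor formalism; this diagram is filtered with colimit $\mathscr F$ (since $j_{i!}j_i^*\mathscr F\to\mathscr F$ is an equivalence on stalks at every point eventually, and $X$ is $\mathcal C$-hypercomplete, the colimit is $\mathscr F$).

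The key computation is then to evaluate $\operatorname{Map}(\mathscr F, j_{i!}j_i^*\mathscr F)$ and exhibit a class that does not factor through any finite stage. Compactness of $\mathscr F$ would force the identity map $\mathscr F\to\mathscr F=\colimit_i j_{i!}j_i^*\mathscr F$ to factor through some $j_{i!}j_i^*\mathscr F$; but such a factorization would make $\mathscr F$ a retract of $j_{i!}j_i^*\mathscr F$, and in particular force $\mathscr F_{x_i}$ to be a retract of $(j_{i!}j_i^*\mathscr F)_{x_i}=0$, contradicting $x_i\in\supp\mathscr F$. (Here I use that extension by zero has vanishing stalks outside the open set, and that $x_i\notin U_i$.) Since the category $\mathcal C$ is stable, "retract of zero" really does mean "zero", so this contradiction is clean.

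The main obstacle I anticipate is handling the case where $X$ is not $\sigma$-compact, so that a single increasing sequence of opens need not exhaust $X$: one must instead use the filtered poset of all relatively compact open subsets (or all opens with compact closure) ordered by inclusion, verify that this is filtered and that $\colimit_U j_{U!}j_U^*\mathscr F\simeq\mathscr F$ using $\mathcal C$-hypercompleteness stalkwise, and then argue that if the identity factors through a single $j_{U!}j_U^*\mathscr F$ then $\supp\mathscr F\subseteq U$, whence $\supp\mathscr F$ is contained in a relatively compact open. This last point does not immediately give compactness of $\supp\mathscr F$ itself — one gets that $\supp\mathscr F$ is contained in a compact set, so I would additionally want $\supp\mathscr F$ to be closed, which it is: the complement is open because having zero stalk is an open condition (if $\mathscr F_x\simeq 0$ then $\mathscr F$ vanishes on a neighborhood of $x$, again using local compactness together with $\mathcal C$-hypercompleteness, or more directly since $\mathscr F|_U$ determines the stalks on $U$). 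Thus $\supp\mathscr F$ is a closed subset of a compact set, hence compact. A secondary technical point to get right is the identification of the stalk $(j_{U!}j_U^*\mathscr F)_x$, which should follow formally from the compatibility of $j_!$ with base change to points in the six-functor formalism of \cite{volpe23}.
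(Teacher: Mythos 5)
The first half of your argument—showing $\supp\mathscr F$ is contained in a compact set—is essentially the paper's proof. You correctly identify that the sequential exhaustion is unnecessary and that one should instead use the filtered poset of precompact open subsets $U$, check that $\colimit_U (j_U)_!j_U^*\mathscr F\to\mathscr F$ is an equivalence (stalkwise, using $\mathcal C$-hypercompleteness), and use compactness of $\mathscr F$ to factor the identity through a single stage, whence $\mathscr F$ is a retract of $(j_U)_!(j_U)^*\mathscr F$ and its stalks vanish outside $\overline U$. The retract-of-zero observation is exactly the right way to finish this step.

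The gap is in your treatment of closedness of $\supp\mathscr F$. You assert that ``having zero stalk is an open condition,'' justified by local compactness, hypercompleteness, or the fact that $\mathscr F\vert_U$ determines the stalks on $U$. None of these suffice: for a general sheaf the support need not be closed (take $j_!\mathscr G$ for $j$ the inclusion of a non-closed open subset, whose support can be a non-closed subset of $X$), and a stalk $\mathscr F_x$ is a filtered colimit of sections over neighborhoods of $x$, which can vanish without $\mathscr F$ vanishing near $x$. Closedness of the support genuinely requires a \emph{second} use of the compactness of $\mathscr F$. The paper's device: for $x\notin\supp\mathscr F$, the recollement fiber sequence $j_!j^*\mathscr F\to\mathscr F\to i_*i^*\mathscr F$ for $j\colon X\setminus\{x\}\hookrightarrow X$ shows $j_!j^*\mathscr F\simeq\mathscr F$; since $j_!$ is a fully faithful left adjoint it reflects compact objects, so $j^*\mathscr F$ is compact in $\Shv(X\setminus\{x\},\mathcal C)$; by the first part its support lies in a compact, hence closed, subset of $X$ missing $x$, so $x$ is interior to $X\setminus\supp\mathscr F$. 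You should replace your open-condition claim with an argument of this kind (or some other argument that actually exploits compactness of $\mathscr F$); as written, that step fails.
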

\begin{proof}
  We first show that $\supp\mathscr F$ is contained in a compact subset of $X$.
  Consider the canonical map
  \begin{equation}\label{eq:precpt-filter}
    \colimit_U (j_U)_!j_U^*\mathscr F
    \to\mathscr F,
  \end{equation}
  where the colimit ranges over the poset of precompact open sets ordered by the rule $U\leq V$ if $\overline U\subseteq V$, and for each
  such $U$ we have denoted by $j_U\colon U\hookrightarrow X$ the inclusion.
  Since precompact open sets form a basis for the topology on $X$, the map~\eqref{eq:precpt-filter} is an equivalence of sheaves.
  Let $\phi\colon\mathscr F\xrightarrow{\sim} \colimit_U (j_U)_!(j_U)^*\mathscr F$ be some choice of inverse.
  Any finite union of precompact open sets is again precompact open, so the poset of precompact open sets is filtered. 
  Hence compactness of $\mathscr F$ implies that $\phi$ factors through $(j_U)_!j_U^*\mathscr F$ for some precompact open $U$, and it follows that $\supp\mathscr F$ is contained in a compact subset $\overline U\subseteq X$, as claimed.

  By the above, it remains only to be seen that $\supp\mathscr F$ is closed, or equivalently that its complement $X\setminus\supp\mathscr F$ is open.
  Suppose $x\in X\setminus{\supp\mathscr F}$. Then we have a recollement fiber sequence
  $$
  j_!j^*\mathscr F\to\mathscr F\to i_*i^*\mathscr F,
  $$
  where $j\colon X\setminus\lbrace x\rbrace\hookrightarrow X$
  and $i\colon\lbrace x\rbrace\hookrightarrow X$ are the inclusions, and since $x\not\in\supp\mathscr F$ we have $j_!j^*\mathscr F\simeq\mathscr F$.
  Since $j_!$ is a fully faithful left adjoint, it reflects compact objects, and we conclude that $j^*\mathscr F$ is again compact. But then $j^*\mathscr F$ is supported on a compact subset of $X\setminus\lbrace x\rbrace$ by the above, which must be closed as a subset of $X$, and hence $x$ lies in the interior of $X\setminus\supp\mathscr F$ as desired.
\end{proof}
\begin{lem}\label{lem:prop-pullback}
  If $f\colon X\to Y$ is a proper map of locally compact Hausdorff spaces,
  then the pullback functor $f^*$ preserves compact objects.
  In particular, if $X$ is a compact Hausdorff space and $E\in\mathcal C^\omega$, then $E_X\in\Shv(X,\mathcal C)^\omega$, where $E_X$ denotes the constant sheaf at $E$.
\end{lem}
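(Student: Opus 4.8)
The plan is to deduce both statements from the standard adjunction criterion for preservation of compact objects, together with proper base change. Recall that the pullback functor $f^*\colon\Shv(Y,\mathcal C)\to\Shv(X,\mathcal C)$ is left adjoint to the pushforward $f_*$. Given a compact sheaf $\mathscr G\in\Shv(Y,\mathcal C)^\omega$, we have a natural equivalence
$$
\operatorname{Map}_{\Shv(X,\mathcal C)}(f^*\mathscr G,{-})\simeq\operatorname{Map}_{\Shv(Y,\mathcal C)}(\mathscr G,f_*({-})),
$$
so if $f_*$ preserves filtered colimits, then the right-hand side is a composite of two functors that preserve filtered colimits, and hence $f^*\mathscr G$ is compact. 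I want to emphasize that only this implication is needed here, not its converse; in particular the argument does not use that $\Shv(X,\mathcal C)$ or $\Shv(Y,\mathcal C)$ is compactly generated, which would be circular given the aims of this note.

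It therefore suffices to show that $f_*$ preserves filtered colimits --- indeed, all small colimits --- when $f$ is proper. For this I would invoke the six-functor formalism for $\mathcal C$-valued sheaves on locally compact Hausdorff spaces \cite{volpe23}: for a proper map $f$ the canonical natural transformation $f_!\to f_*$ is an equivalence, and $f_!$, being a left adjoint (its right adjoint is the exceptional inverse image $f^!$), preserves all small colimits. Hence $f_*\simeq f_!$ does too, which proves the first assertion. For the ``in particular'', let $X$ be a compact Hausdorff space and let $p\colon X\to *$ be the terminal map; it is proper precisely because $X$ is compact. Under the identification $\Shv(*,\mathcal C)\simeq\mathcal C$ the constant sheaf $E_X$ is $p^*E$, so it is compact by the case already established.

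I do not expect a serious obstacle: the argument is formal once one has the $(f_!,f^!)$-adjunction and the identification $f_!\simeq f_*$ for proper maps available at the required level of generality --- arbitrary continuous maps of locally compact Hausdorff spaces, with coefficients in a compactly generated stable $\infty$-category --- and these inputs are exactly what \cite{volpe23} supplies. A more hands-on alternative would be to unwind the description of $f_*\simeq f_!$ as ``sections with proper support'' and verify commutation with filtered colimits directly, but the adjoint argument is cleaner and avoids point-set bookkeeping.
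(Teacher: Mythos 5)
Your argument is correct and is essentially identical to the paper's proof: both use that $f^*\dashv f_*\simeq f_!\dashv f^!$ for proper $f$, so that $f_*$ preserves colimits and hence its left adjoint $f^*$ preserves compact objects, with the constant-sheaf case obtained by taking $f$ to be the projection to a point. The extra details you supply (the explicit mapping-space equivalence and the remark that no compact generation is needed) are accurate but do not change the approach.
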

\begin{proof}
  Since $f$ is proper, the pullback $f^*$ is left adjoint to $f_*\simeq f_!$, which is itself left adjoint to $f^!$. Hence $f_*$ preserves colimits, and it follows that its left adjoint $f^*$ preserves compact objects.
  The statement about constant sheaves follows by taking $f$ to be the projection from $X$ to a point.
\end{proof}
Our main result is the following description of the compact objects in $\Shv (X,\mathcal C)$:
\begin{thm}\label{thm:cpt-sheaves}
  Let $X$ be a $\mathcal C$-hypercomplete locally compact Hausdorff space. A sheaf $\mathscr F\in\Shv(X,\mathcal C)$ is compact if and only if
  \begin{enumerate}[label=(\roman*)]
  \item $\supp\mathscr F$ is compact;
  \item $\mathscr F$ is locally constant; and
  \item $\mathscr F_x\in\mathcal C^\omega$ for each $x\in X$.
  \end{enumerate}
\end{thm}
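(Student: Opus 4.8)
My plan is to treat the two implications separately, since they call for rather different techniques.

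For the ``only if'' direction, condition~(i) is exactly Lemma~\ref{lem:cpt-supp}, and (iii) is almost immediate: since $X$ is Hausdorff the point inclusion $i_x\colon\lbrace x\rbrace\hookrightarrow X$ is a closed immersion, hence proper, so Lemma~\ref{lem:prop-pullback} gives $\mathscr F_x=i_x^*\mathscr F\in\mathcal C^\omega$. The substance is (ii), local constancy, and I would first reduce to the case that $X$ is compact. Step one: if $V\supseteq\supp\mathscr F$ is open then $j_!j^*\mathscr F\simeq\mathscr F$ for $j\colon V\hookrightarrow X$, as the cofiber $i_*(\mathscr F|_{X\setminus V})$ of this map vanishes; since $j_!$ is a fully faithful left adjoint it reflects compact objects, so $\mathscr F|_V$ is compact. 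Step two: running the proof of Lemma~\ref{lem:cpt-supp}, $\mathscr F\simeq\colimit_U (j_U)_!j_U^*\mathscr F$ over the filtered poset of precompact opens, so compactness makes $\mathrm{id}_{\mathscr F}$ factor through $(j_{U_0})_!(\mathscr F|_{U_0})$, where by filteredness we may take $U_0\supseteq\supp\mathscr F$; then $\mathscr F$ is a retract of $(j_{U_0})_!(\mathscr F|_{U_0})$ and so $\supp\mathscr F\subseteq\overline{U_0}$. Step three: $\overline{U_0}\hookrightarrow X$ is proper, hence $\mathscr F|_{\overline{U_0}}$ is again compact by Lemma~\ref{lem:prop-pullback}; moreover $\overline{U_0}$ is compact Hausdorff and still $\mathcal C$-hypercomplete (a vanishing-stalk sheaf on a closed subspace pushes forward to a vanishing-stalk sheaf), and since $\supp\mathscr F\subseteq U_0$ with $U_0$ open in $\overline{U_0}$, local constancy of $\mathscr F$ may be checked there. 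Everything thus comes down to: \emph{a compact object of $\Shv(X,\mathcal C)$, for $X$ compact Hausdorff and $\mathcal C$-hypercomplete, is locally constant}. This is the main obstacle. The approach I would attempt: as $X$ is compact, $\Gamma(X,-)\simeq\Gamma_c(X,-)=p_!$ is a left adjoint, so compactness of $\mathscr F$ forces the canonical map $\colimit_i\underline{\operatorname{Map}}(\mathscr F,\mathscr G_i)\to\underline{\operatorname{Map}}(\mathscr F,\colimit_i\mathscr G_i)$ of internal mapping sheaves to become an equivalence after applying $\Gamma(X,-)$, for every filtered diagram; near a point where $\mathscr F$ is locally constant the stalk of $\underline{\operatorname{Map}}(\mathscr F,-)$ is $\operatorname{Map}_{\mathcal C}(\mathscr F_x,-)$ on stalks, hence ``pointwise finitary'' because $\mathscr F_x\in\mathcal C^\omega$; and I would try to play these facts against a filtered system of shrinking neighbourhoods to force the closed locus of non-local-constancy — whose support behaviour is constrained by Lemma~\ref{lem:cpt-supp} — to be empty. $\mathcal C$-hypercompleteness enters precisely because it lets one certify local constancy by a stalkwise computation.

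For the ``if'' direction, local constancy makes $\supp\mathscr F$ clopen: a neighbourhood on which $\mathscr F$ is constant either lies in $\supp\mathscr F$ (if the stalk is nonzero) or is disjoint from it. With (i), $K:=\supp\mathscr F$ is compact clopen, so $\Shv(X,\mathcal C)\simeq\Shv(K,\mathcal C)\times\Shv(X\setminus K,\mathcal C)$ and $\mathscr F\leftrightarrow(\mathscr F|_K,0)$; an object of such a product is compact iff each factor is, so I may assume $X=K$ is compact Hausdorff (still $\mathcal C$-hypercomplete, being clopen in $X$). Choose a finite open cover $U_1,\dots,U_n$ of $X$ with $\mathscr F|_{U_i}$ constant at $E_i\in\mathcal C^\omega$ (by local constancy and (iii), using compactness of $X$); by the shrinking lemma for the normal space $X$ pick a closed refinement $Z_1,\dots,Z_n$ with $Z_i\subseteq U_i$. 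For a nonempty $S\subseteq\lbrace1,\dots,n\rbrace$ with $Z_S:=\bigcap_{i\in S}Z_i$ nonempty, $Z_S$ is compact and $\mathscr F|_{Z_S}$ is constant at a common value $E_S\in\mathcal C^\omega$ (the stalk at any point of $Z_S$). The key fact is finite-closed-cover descent: for every $\mathscr H$ the unit map $\mathscr H\to\limit_{S}(i_{Z_S})_*(\mathscr H|_{Z_S})$ (over such $S$) is an equivalence, as one sees on stalks — for $x\in X$ the sub-poset of $S$ with $x\in Z_S$ has a terminal object, while the remaining terms are zero and form an up-set, so do not affect the finite limit — combined with $\mathcal C$-hypercompleteness. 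Feeding this into $\operatorname{Map}_{\Shv(X,\mathcal C)}(\mathscr F,-)$ and using $(i_{Z_S})^*\dashv(i_{Z_S})_*$ with $(i_{Z_S})^*\mathscr F\simeq(E_S)_{Z_S}$ yields
$$
\operatorname{Map}(\mathscr F,\mathscr H)\;\simeq\;\limit_{S}\operatorname{Map}_{\mathcal C}\!\bigl(E_S,\,\Gamma(Z_S,\mathscr H|_{Z_S})\bigr),
$$
a \emph{finite} limit. Each term is finitary: $(i_{Z_S})^*$ preserves filtered colimits, $\Gamma(Z_S,-)\simeq(p_{Z_S})_!$ is a left adjoint (with $Z_S$ compact, $(p_{Z_S})_!\simeq(p_{Z_S})_*$), and $E_S\in\mathcal C^\omega$. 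Since finite limits commute with filtered colimits, $\operatorname{Map}(\mathscr F,-)$ is finitary, i.e.\ $\mathscr F$ is compact.

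In summary, the ``if'' direction is essentially formal once one has the six-functor formalism (properness of $Z_S\to\ast$, so that sections over a compact set form a left adjoint) and $\mathcal C$-hypercompleteness (for closed descent); the genuinely hard input is the local-constancy statement (ii), where one must close the gap between being ``constructible with compact support and perfect stalks'' and being an honest compact object.
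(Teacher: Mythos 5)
Your sufficiency argument is essentially the paper's: reduce to $X=\supp\mathscr F$ compact using that the support is compact clopen, take a finite closed cover on whose members $\mathscr F$ is constant with compact values, and express $\operatorname{Map}(\mathscr F,-)$ as a finite limit of finitary functors via closed-cover descent. (The paper indexes the descent diagram over $\Delta_{\leq n-1}$ and proves descent via \v Cech descent for maps admitting local sections, while you index over the poset of nonempty subsets and check the descent map on stalks; both work, and your stalkwise check is a legitimate place for hypercompleteness to enter.) Conditions (i) and (iii) in the necessity direction are likewise handled correctly, and your reduction of (ii) to the case of a compact Hausdorff base is fine.

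The genuine gap is the heart of (ii): having reduced to showing that a compact object of $\Shv(X,\mathcal C)$ on a compact Hausdorff, $\mathcal C$-hypercomplete $X$ is locally constant, you only sketch an ``approach you would attempt,'' and that approach does not close. Knowing that $\Gamma(X,-)$ applied to the comparison map $\colimit_i\mathscr{H}om(\mathscr F,\mathscr G_i)\to\mathscr{H}om(\mathscr F,\colimit_i\mathscr G_i)$ is an equivalence tells you nothing about the map itself, since $\Gamma(X,-)$ is not conservative; and there is no well-defined ``closed locus of non-local-constancy'' to which Lemma~\ref{lem:cpt-supp} could be applied. The paper's actual argument is different and is the one piece of the proof you are missing: fix $x$ with stalk $E=\mathscr F_x\in\mathcal C^\omega$; since $E\simeq\colimit_{U\ni x}\mathscr F(U)$ and $E$ is compact, the map $\mathscr F(U)\to E$ admits a section for some $U$; choosing a precompact open $W\ni x$ with $\overline W\subseteq U$, this section transposes to a map $\sigma\colon E_{\overline W}\to\mathscr F\vert_{\overline W}$ of sheaves on $\overline W$ inducing an equivalence on stalks at $x$. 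Both source and target are compact by Lemma~\ref{lem:prop-pullback}, so $\operatorname{cofib}(\sigma)$ is compact and therefore has closed support by Lemma~\ref{lem:cpt-supp}; on the open neighborhood $W\setminus\supp\operatorname{cofib}(\sigma)$ of $x$ the map $\sigma$ is a stalkwise equivalence, hence an equivalence by hypercompleteness, exhibiting $\mathscr F$ as constant near $x$.
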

In particular, note that conditions (i) and (ii) together imply that if $\mathscr F$ is compact, then the support $\mathscr F$ must be a compact open subset of $X$.
On the other hand, (iii) guarantees that if $\mathscr F$ is constant on $U\subseteq X$, say with value $E$, then $E\in\mathcal C^\omega$. 
\begin{proof}
  `Necessity.' Suppose we are given $\mathscr F\in\Shv(X,\mathcal C)^\omega$.
  Then $\mathscr F$ must satisfy (i) by Lemma~\ref{lem:cpt-supp} and (ii) by Lemma~\ref{lem:prop-pullback}, since the stalk $\mathscr F_x$ at $x\in X$
  is the same as the pullback $i_x^*\mathscr F$ along the inclusion $i_x\colon\lbrace x\rbrace\hookrightarrow X$, which is always a proper map.
  It remains only to be seen that $\mathscr F$ is locally constant.
  Fix a point $x\in X$,
  and let $i_x$ again denote the inclusion of this point into $X$. Let $E = i_x^*\mathscr F$ denote the stalk of $\mathscr F$ at $x$.
  By \cite[Cor~7.1.5.6]{HTT}, there is an equivalence $\colimit_U\mathscr F(U)\simeq E$, where $U$ ranges over the poset of open neighborhoods of $x$.
  As $E$ is compact, this implies that
  $\mathscr F(U)\to E$ has a section for some $U$. Pick a precompact open neighborhood $W\ni x$ with $\overline W\subseteq U$, 
  and let $i\colon\overline W\hookrightarrow X$ denote the inclusion.
  As the canonical map $\mathscr F(U)\to E$ factors through the restriction $\mathscr F(U)\to(i^*\mathscr F)(\overline W)\to\mathscr F(W)$,
  the map $(i^*\mathscr F)(\overline W)\to E$ also admits a section $E\to (i^*\mathscr F)(\overline W)$.
  Viewing the latter as a morphism
  from the constant presheaf with value $E$ to $i^*\mathscr F$, 
  we get an induced map $\sigma\colon E_{\overline W}\to i^*\mathscr F$
  of sheaves over $\overline W$ which by construction induces an equivalence of stalks at $x$.
  Here both $E_{\overline W}$ and $i^*\mathscr F$ are compact, so the cofiber $\mathscr Q = \operatorname{cofib}(\sigma )$ is again compact.
  But then $\supp\mathscr Q$ is compact,
  so $W'=W\setminus\supp\mathscr Q$ is open
  and $\mathscr Q_x\simeq 0$ so $x\in W'$.
  \begin{figure}[h]
    \centering
    \def\svgwidth{0.5\textwidth}
    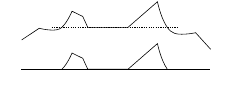
    \caption{`Espace \'etale' visualization of the fiber sequence $E_{\overline W}\to\mathscr F\to\mathscr Q$}
    \label{fig:espace-etale}
  \end{figure}  
  Furthermore, $\sigma$ restricts to an equivalence of sheaves on $W'$ by construction, so $\mathscr F\vert_{W'}$ is equivalent to the constant sheaf on $W'$ with value $E$, as desired.
  
  `Sufficiency.' Let $i\colon\supp\mathscr F\hookrightarrow X$ denote the inclusion. Since $i$ is both proper and an open immersion, both $i_*\simeq i_!$ and $i^*\simeq i^!$ preserve and reflect compact objects. We may therefore assume that $X$ is compact, after possibly replacing it with $\supp\mathscr F$.
  Pick a finite collection of closed subsets $Z_i\subseteq X$, $i=1,\dots ,n$, such that $\mathscr F$ is constant in a neighborhood of each $Z_i$ and such that $X$ is covered by the interiors $Z_i^{\mathrm o}$.
  Descent (Corollary~\ref{cor:descent-lem}) implies that the canonical functor
  \begin{align*}
    &\Shv(X,\mathcal C) \\
    &\qquad\to
      \displaystyle{
      \limit_{\Delta_{\leq n-1}}}
      \left(
      \begin{tikzcd}[ampersand replacement=\&,column sep=small]
        \Shv(\bigcap_1^nZ_i,\mathcal C)
        \arrow[r,shift left=10pt]
        \arrow[r,phantom,"\raisebox{1.5ex}{\vdots}"]
        \arrow[r,shift right=10pt]
        \&\cdots
        \arrow[r,shift left=10pt]
        \arrow[r,leftarrow,shorten <= 2pt,shorten >= 2pt,shift left=5pt]
        \arrow[r]
        \arrow[r,leftarrow,shorten <= 2pt,shorten >= 2pt,shift right=5pt]
        \arrow[r,shift right=10pt]
        \&\prod_{i,j}\Shv(Z_i\cap Z_j,\mathcal C)
        \arrow[r,shift left=5pt]
        \arrow[r,leftarrow,shorten <= 2pt,shorten >= 2pt]
        \arrow[r,shift right=5pt]
        \&\prod_i\Shv (Z_i,\mathcal C)
      \end{tikzcd}
      \right)
  \end{align*}
  is an equivalence.
  Write $I = \lbrace 1,\dots ,n\rbrace$ for short and put $Z_J = \bigcap_{j\in J}Z_j$ for each $J\subseteq I$.
  The canonical projection from $\Shv(X,\mathcal C)$ to 
  $\Shv(Z_J,\mathcal C)$
  is the restriction map.
  By construction, we have that 
  for each $J\subseteq I$, the restriction $\mathscr F\vert_{Z_J}$ is constant with value a compact object, and hence compact as an object of $\Shv(Z_J,\mathcal C)$ by the preceding lemma.
  According to \cite[Lem~6.3.3.6]{HTT}, the identity functor $\text{id}\colon\Shv (X,\mathcal C)\to\Shv (X,\mathcal C)$ is the colimit of a diagram $\Delta_{\leq n-1}\to\operatorname{Fun}(\Shv (X,\mathcal C),\Shv (X,\mathcal C))$ which sends the object $\lbrack k\rbrack\in\Delta_{\leq n-1}$ to the composition 
  $$
  \begin{tikzcd}
`    \Shv (X,\mathcal C)\arrow[r]\arrow[rr,bend left=20,"i_k^*",shorten <= 8pt ]
    &\displaystyle{\prod_{\substack{J\subseteq I, \\ |J|=k}}}\Shv (Z_J,\mathcal C)
    \arrow[r,phantom,"\simeq"]
    &\Shv \bigg(\displaystyle{\coprod_{\substack{J\subseteq I, \\ |J|=k}}} Z_J,\mathcal C\bigg)\arrow[r,"(i_k)_*"]
    & \Shv (X,\mathcal C),
  \end{tikzcd}
  $$
  and so for any filtered system $\lbrace\mathscr G_\alpha\rbrace_{\alpha\in A}$, we find
  \begin{align*}
    \operatorname{Map}(\mathscr F,\colimit_A\mathscr G_\alpha )
    &\simeq\limit_{\lbrack k\rbrack\in\Delta_{\leq n-1}}\operatorname{Map}(\mathscr F,(i_k)_*i_k^*\colimit\mathscr G_\alpha ) \\
    &\simeq
      \limit_{\lbrack k\rbrack\in\Delta_{\leq n-1}}\operatorname{Map}(i_k^*\mathscr F,\colimit_A i_k^*\mathscr G_\alpha ) \\
    &\simeq\limit_{\lbrack k\rbrack\in\Delta_{\leq n-1}}\colimit_A\operatorname{Map}(i_k^*\mathscr F, i_k^*\mathscr G_\alpha ) \\
    &\simeq\colimit_A\limit_{\lbrack k\rbrack\in\Delta_{\leq n-1}}\operatorname{Map}(i_k^*\mathscr F, i_k^*\mathscr G_\alpha ) \\
    &\simeq\colimit_A\operatorname{Map}(\mathscr F,\mathscr G_\alpha ),
  \end{align*}
  where the third equivalence uses that the restriction $i_k^*\mathscr F$ is compact\footnote{Indeed, we have already observed that $\mathscr F\vert_{Z_J}$ is compact for each $J$, and hence the associated object $i_k^*\mathscr F$ in the product $\Pi_J\Shv (Z_J,\mathcal C)$is also compact according to \cite[Lem~5.3.4.10]{HTT}.} and the second-last equivalence uses that filtered colimits commute are left exact in $\mathcal S$.
\end{proof}
As a corollary, we recover Neeman's result:
\begin{cor}[Neeman]
  Let $M$ be a non-compact connected manifold. Then $\mathscr F\in\Shv (M,\mathcal C)^\omega$ if and only if $\mathscr F\simeq 0$.
\end{cor}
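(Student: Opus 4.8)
The plan is to deduce this immediately from Theorem~\ref{thm:cpt-sheaves}, so the only preparatory point is to check that the hypotheses of that theorem are met. A connected manifold $M$ (with the usual conventions) is paracompact and locally of covering dimension $\leq\dim M$, hence $\mathcal S$-hypercomplete by \cite[Cor~7.2.1.12]{HTT}, and therefore $\mathcal C$-hypercomplete for our compactly generated stable $\mathcal C$ as explained in Section~\ref{sec:hypercompleteness}. (Alternatively one may invoke the fact that $M$ is a CW complex and cite \cite{hoyois-cw}.) Thus Theorem~\ref{thm:cpt-sheaves} applies to $X = M$.

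For the forward direction, suppose $\mathscr F\in\Shv(M,\mathcal C)^\omega$. By conditions (i) and (ii) of Theorem~\ref{thm:cpt-sheaves}---precisely the observation recorded in the paragraph following its statement---the subset $\supp\mathscr F\subseteq M$ is compact and open; since $M$ is Hausdorff, a compact subset is closed, so $\supp\mathscr F$ is clopen. As $M$ is connected, $\supp\mathscr F$ is either $\emptyset$ or all of $M$. But $\supp\mathscr F$ is compact and $M$ is non-compact, so the second possibility is excluded, and hence $\supp\mathscr F = \emptyset$. This means $\mathscr F_x\simeq 0$ for every $x\in M$, and since $M$ is $\mathcal C$-hypercomplete the stalk functors are jointly conservative, whence $\mathscr F\simeq 0$. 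The converse is trivial: the zero object is compact, because $\operatorname{Map}_{\Shv(M,\mathcal C)}(0,-)$ is the constant functor at a point and thus preserves all filtered colimits.

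Since all of the substance is contained in Theorem~\ref{thm:cpt-sheaves}, I do not expect any genuine obstacle here; the proof is a three-line consequence once the theorem is in hand. The only things that could conceivably require care are (a) confirming that the class of manifolds under consideration indeed satisfies the $\mathcal C$-hypercompleteness hypothesis, and (b) the elementary topological fact that a compact open subset of a connected Hausdorff space is either empty or the whole space---both of which are routine.
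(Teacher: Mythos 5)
Your proof is correct and follows exactly the route the paper intends: the corollary is deduced from Theorem~\ref{thm:cpt-sheaves} via the observation that a compact sheaf has compact open (hence clopen) support, which must be empty on a connected non-compact space, and then joint conservativity of the stalk functors finishes the argument. Your verification of the $\mathcal C$-hypercompleteness hypothesis for manifolds matches the paper's remarks in Section~\ref{sec:hypercompleteness}, so there is nothing to add.
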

In fact our result shows that the conclusion of Neeman's result holds more generally if $M$ is replaced by a $\mathcal C$-hypercomplete locally compact Hausdorff space $X$ whose quasicomponents are all non-compact.

\medskip

As a further corollary to our theorem, we can also describe the dualizable objects of, say, the $\infty$-category of $\text{Mod}_R$-valued sheaves on a $\text{Mod}_R$-hypercomplete locally compact Hausdorff space $X$, where $R$ is some $\mathbb E_\infty$-ring. As with the compact objects, the dualizable objects turn out to be very sparse:
\begin{cor}
  Let $\mathrm{Mod}_R^\otimes$ be the symmetric monoidal $\infty$-category of modules over an $\mathbb E_\infty$-ring $R$, and let $X$ be a $\mathrm{Mod}_R$-hypercomplete locally compact Hausdorff space. 
  With respect to the induced symmetric monoidal structure on $\Shv (X,\mathrm{Mod}_R)$, 
  a sheaf $\mathscr F\in\Shv (X,\mathrm{Mod}_R)$ is dualizable
  if and only if
  \begin{enumerate}[label=(\roman*)]
  \item $\mathscr F$ is locally constant, and
  \item $\mathscr F_x$ is a perfect $R$-module\footnote{i.e. a compact object of $\mathrm{Mod}_R$} for each $x\in X$.
  \end{enumerate}
\end{cor}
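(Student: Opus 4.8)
The plan is to deduce this from Theorem~\ref{thm:cpt-sheaves} by localising to precompact pieces of $X$, and two elementary observations do most of the work. First, if $Y$ is a compact Hausdorff space then the monoidal unit of $\Shv(Y,\mathrm{Mod}_R)$ is the constant sheaf $R_Y$ on $R\in\mathrm{Mod}_R^\omega$, which is compact by Lemma~\ref{lem:prop-pullback}; hence every dualizable object $\mathscr G\in\Shv(Y,\mathrm{Mod}_R)$ is compact, since $\operatorname{Map}(\mathscr G,-)\simeq\operatorname{Map}(R_Y,\mathscr G^\vee\otimes-)$ and $\mathscr G^\vee\otimes(-)$ preserves colimits. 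Second, $\mathrm{Mod}_R$-hypercompleteness passes to closed subspaces: for $\iota\colon Z\hookrightarrow X$ closed, the fully faithful functor $\iota_*$ carries stalkwise equivalences on $Z$ to stalkwise equivalences on $X$ (being an equivalence on stalks over $Z$ and zero elsewhere), so on $Z$ stalkwise equivalences are already equivalences.

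For \emph{necessity}, suppose $\mathscr F$ is dualizable and fix $x\in X$. Choosing a precompact open $U\ni x$, the closure $\overline U$ is compact Hausdorff and $\mathrm{Mod}_R$-hypercomplete by the second observation, and restriction along $\overline U\hookrightarrow X$ is symmetric monoidal, so $\mathscr F|_{\overline U}$ is dualizable, hence compact by the first observation, hence locally constant with perfect stalks by Theorem~\ref{thm:cpt-sheaves}. Since $\mathscr F|_{\overline U}$ has the same stalks as $\mathscr F$ over $\overline U$, and since such $U$ cover $X$, it follows that $\mathscr F$ is locally constant with perfect stalks.

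For \emph{sufficiency}, the first point is that a sheaf which is locally constant with perfect stalks is \emph{locally dualizable}: on an open $V$ over which $\mathscr F$ is constant, its value $E$ is one of its stalks, hence perfect, hence dualizable in $\mathrm{Mod}_R$, and the constant-sheaf functor $\mathrm{Mod}_R\to\Shv(V,\mathrm{Mod}_R)$ — being the pullback along $V\to\ast$, hence symmetric monoidal, and colimit-preserving as a left adjoint — sends $E$ to the dualizable sheaf $\mathscr F|_V$. It then remains to check that dualizability of a sheaf is a local property. Using the standing hypercompleteness hypothesis, I would do this by testing the comparison map $\underline{\operatorname{Hom}}(\mathscr F,R_X)\otimes\mathscr G\to\underline{\operatorname{Hom}}(\mathscr F,\mathscr G)$ — whose being an equivalence for all $\mathscr G$ characterises dualizability of $\mathscr F$ — on stalks: both sides and the map restrict along an open immersion $j\colon V\hookrightarrow X$ to the corresponding data for $\mathscr F|_V$ (internal hom commutes with $j^*$ by the projection formula), so on a $V$ where $\mathscr F$ is constant the comparison map becomes the analogous one in $\mathrm{Mod}_R$ for the perfect module $\mathscr F_x$, which is an equivalence. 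Alternatively, one can invoke \v{C}ech descent to present $\Shv(X,\mathrm{Mod}_R)$ as a limit of the categories $\Shv(\text{finite intersections of a trivialising cover},\mathrm{Mod}_R)$ along symmetric monoidal restriction functors and use that dualizable objects of such a limit are detected termwise, the termwise duals and the evaluation/coevaluation maps assembling by uniqueness of duals.

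I expect the sufficiency direction to house the only real subtlety, namely the locality of dualizability: in the stalkwise approach the care needed is in the compatibility of internal hom with pullback along open immersions, and in the descent approach it is the fact that forming dualizable objects commutes with limits of symmetric monoidal $\infty$-categories. It is worth noting that, in contrast with Theorem~\ref{thm:cpt-sheaves}, there is no support hypothesis in the statement, reflecting that the unit $R_X$ is always dualizable but is compact only when $X$ is compact.
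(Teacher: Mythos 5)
Your proposal is correct and follows essentially the same route as the paper: necessity via restriction to the closure of a precompact neighborhood, where the unit is compact so dualizable implies compact and Theorem~\ref{thm:cpt-sheaves} applies; sufficiency by checking the coevaluation/comparison map $\mathscr{H}om(\mathscr F,R_X)\otimes(-)\to\mathscr{H}om(\mathscr F,-)$ locally on opens where $\mathscr F$ is constant, using that pullback is symmetric monoidal. Your explicit remark that $\mathrm{Mod}_R$-hypercompleteness passes to the closed subspace $\overline U$ is a detail the paper leaves implicit, and is a worthwhile addition.
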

\begin{proof}
  `Sufficiency.' Since $\Shv (X,\mathrm{Mod}_R)^\otimes$ is closed symmetric monoidal, it suffices to show that for each sheaf $\mathscr F$ satisfying the two conditions, the canonical map
  \begin{equation}\label{eq:dual-test}
    \mathscr{H}om (\mathscr F,R_X)\otimes\mathscr F\to\mathscr Hom (\mathscr F,\mathscr F)
  \end{equation}
  is an equivalence,
  where $R_X$ is the constant sheaf at $R$ and $\mathscr Hom ({-},{-})$ denotes the internal mapping object in $\Shv (X,\mathrm{Mod}_R)$.
  For sufficiently small open subsets $U\subseteq X$, the restriction $\mathscr F\vert_U$ is equivalent to the constant sheaf $F_U = \pi^*F$ at a perfect $R$-module $F$, 
  and since pullback--being symmetric monoidal--preserves dualizable sheaves, we find that~\eqref{eq:dual-test} restricts to an equivalence on $U$. Since~\eqref{eq:dual-test} is a morphism of sheaves which is locally an equivalence, it must be an equivalence, proving the claim.

  `Necessity.' Assume that $\mathscr F$ is dualizable, and let $x\in X$ be some point. The condition on the stalks is immediate, since pullback preserves dualizable sheaves.
  We must show that $\mathscr F$ is locally constant in a neighborhood of $x$. Pick a precompact open neighborhood $U\ni x$. Then $\mathscr F\vert_{\overline U}$ is again dualizable, and since the monoidal unit $R_{\overline U} = \pi^*R\in\Shv (\overline U,\mathrm{Mod}_R)$ is compact, it follows that $\mathscr F\vert_{\overline U}$ is compact as an object of $\Shv (\overline U,\mathrm{Mod}_R)$. But then the previous theorem implies that it must be locally constant on $\overline U$, and hence also on the subset $U$ as desired.
\end{proof}
\section{When is $\Shv(X,\mathcal C)$ compactly generated?}
In this section, we prove the following characterization of those locally compact Hausdorff spaces $X$ that have $\Shv (X,\mathcal C)$ compactly generated:
\begin{prop}\label{prop:when-cpt}
  Let $\mathcal C$ be a non-trivial compactly generated stable $\infty$-category, and
  let $X$ be a $\mathcal C$-hypercomplete locally compact Hausdorff space. Then $\Shv (X,\mathcal C)$ is compactly generated if and only if $X$ is totally disconnected.
\end{prop}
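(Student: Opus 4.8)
The plan is to derive both implications from Theorem~\ref{thm:cpt-sheaves}, using throughout the standard fact that a presentable stable $\infty$-category $\mathcal D$ is compactly generated exactly when the localizing subcategory generated by $\mathcal D^\omega$ is all of $\mathcal D$, equivalently exactly when no nonzero object is right-orthogonal to every compact object.

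\emph{Totally disconnected $\Rightarrow$ compactly generated.} A totally disconnected locally compact Hausdorff space is zero-dimensional, so it has a basis $\mathcal B$ of compact open subsets, and since $X$ is Hausdorff each $U\in\mathcal B$ is in fact clopen. Then $j_U\colon U\hookrightarrow X$ is at once an open and a closed immersion, hence proper, so $(j_U)_!$ preserves compact objects (its right adjoint $j_U^*$ preserves filtered colimits), while $E_U$ is compact for each $E\in\mathcal C^\omega$ by Lemma~\ref{lem:prop-pullback} since $U$ is compact; hence each $(j_U)_!E_U$ is compact. These form a set, and I would check they generate: if $\operatorname{Map}((j_U)_!E_U,\mathscr F)\simeq\operatorname{Map}_{\mathcal C}(E,\mathscr F(U))$ vanishes for all $U\in\mathcal B$ and $E\in\mathcal C^\omega$, then $\mathscr F(U)\simeq 0$ for all $U\in\mathcal B$, so $\mathscr F_x\simeq\colimit_{U\ni x}\mathscr F(U)\simeq 0$ for every $x$ (the colimit over the cofinal family of compact open neighbourhoods of $x$), whence $\mathscr F\simeq 0$ by $\mathcal C$-hypercompleteness.

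\emph{Not totally disconnected $\Rightarrow$ not compactly generated.} I would pick a connected component $K$ of $X$ with at least two points. Then $K$ is closed, hence a locally compact Hausdorff space, and it is $\mathcal C$-hypercomplete (the stalks of $i_{K*}\mathscr G$ are those of $\mathscr G$ on $K$ and $0$ off $K$, and $i_K^*i_{K*}\simeq\mathrm{id}$). The closed immersion $i_K$ has $i_K^*$ colimit-preserving, essentially surjective, and—being proper—compact-object-preserving, so if $\Shv(X,\mathcal C)$ were compactly generated then so would $\Shv(K,\mathcal C)$ be; conversely it is enough to exhibit a nonzero $\mathscr F\in\Shv(K,\mathcal C)$ right-orthogonal to $\Shv(K,\mathcal C)^\omega$, because $i_{K*}\mathscr F$ then does the same on $X$. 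So I may assume $X=K$ is connected. If $X$ is non-compact, Lemma~\ref{lem:cpt-supp} and Theorem~\ref{thm:cpt-sheaves} show the support of any compact sheaf is compact, open, hence clopen, hence empty; thus $\Shv(X,\mathcal C)^\omega=\{0\}$ and (using that $\mathcal C$ is nontrivial) any nonzero skyscraper $i_{x*}E_0$ with $0\neq E_0\in\mathcal C^\omega$ suffices. This recovers the remark after Neeman's corollary.

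\emph{The case $X$ compact, connected, with $\geq 2$ points.} Here every compact sheaf is locally constant with perfect stalks by Theorem~\ref{thm:cpt-sheaves}, so the localizing subcategory $\mathcal L\subseteq\Shv(X,\mathcal C)$ generated by the compact objects is $\operatorname{Ind}(\mathcal L^\omega)$ for a thick subcategory $\mathcal L^\omega$ of the full subcategory $\Shv^{\mathrm{lc}}(X,\mathcal C)\subseteq\Shv(X,\mathcal C)$ of locally constant sheaves. Fix $x\neq y$. I would argue that the stalk functors $i_x^*,i_y^*\colon\Shv(X,\mathcal C)\to\mathcal C$ become naturally equivalent after restriction to locally constant sheaves: for locally constant $\mathscr L$ the assignment $z\mapsto\mathscr L_z$ is itself locally constant, so parallel transport along the connected space $X$ yields an equivalence $\mathscr L_x\simeq\mathscr L_y$ natural in $\mathscr L$—concretely, for nice $X$ a path $\gamma$ from $x$ to $y$ identifies both $i_x^*|_{\Shv^{\mathrm{lc}}}$ and $i_y^*|_{\Shv^{\mathrm{lc}}}$ with $\gamma^*\colon\Shv^{\mathrm{lc}}(X,\mathcal C)\to\Shv^{\mathrm{lc}}([0,1],\mathcal C)\simeq\mathcal C$. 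Since $\mathcal L=\operatorname{Ind}(\mathcal L^\omega)$ with $\mathcal L^\omega\subseteq\Shv^{\mathrm{lc}}(X,\mathcal C)$ and $i_x^*,i_y^*$ preserve filtered colimits, the equivalence propagates to all of $\mathcal L$, so every $\mathscr G\in\mathcal L$ has $\mathscr G_x\simeq\mathscr G_y$. But a nonzero skyscraper $i_{x*}E_0$ has $(i_{x*}E_0)_x\simeq E_0\neq 0\simeq(i_{x*}E_0)_y$, so $i_{x*}E_0\notin\mathcal L$; the cofiber of its $\mathcal L$-colocalization is then a nonzero object right-orthogonal to $\Shv(X,\mathcal C)^\omega$, and $\Shv(X,\mathcal C)$ is not compactly generated.

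\emph{Main obstacle.} Everything is formal except the naturality of $i_x^*\simeq i_y^*$ on locally constant sheaves for an arbitrary connected compact Hausdorff $X$. For locally contractible $X$ (CW complexes, manifolds, \dots) it is immediate from local triviality and connectedness, and this is all that is needed for the applications advertised in the introduction; in general I would have to pass to the shape of $X$, where the required natural transformation is classified by a point of the cofiltered limit—over the shape pro-system of $X$—of the spaces of paths from $x$ to $y$, the point being that this limit is non-empty when $X$ is connected. This shape-theoretic input is where any remaining subtlety lives and is, I expect, the main thing to pin down.
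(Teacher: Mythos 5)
Your sufficiency direction is correct, and is a more hands-on variant of the paper's: the paper simply quotes that $\Shv(X)$ is compactly generated for such $X$ \cite[Prop~6.5.4.4]{HTT} and tensors with $\mathcal C$, whereas you exhibit explicit generators $(j_U)_!E_U$ for $U$ compact open and $E\in\mathcal C^\omega$; both work. Your reduction of the necessity direction to a single connected component, and your disposal of the non-compact connected case via Lemma~\ref{lem:cpt-supp}, are also fine.

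The gap is exactly where you locate it, and it is not a removable technicality of your approach --- it is the point the paper's Lemma~\ref{lem:small-compare} exists to circumvent. Your argument needs a \emph{natural} equivalence $i_x^*\simeq i_y^*$ on (the relevant subcategory of) locally constant sheaves in order to propagate the objectwise equivalences $\mathscr L_x\simeq\mathscr L_y$ through the filtered colimits forming $\mathcal L=\operatorname{Ind}(\mathcal L^\omega)$; objectwise non-natural equivalences alone say nothing about the colimit. For a general connected compact Hausdorff space (a solenoid, the closed topologist's sine curve, a pseudo-arc) there is no parallel transport, and your proposed shape-theoretic repair --- nonemptiness of a cofiltered limit of path spaces over the shape pro-system --- is unsubstantiated; cofiltered limits of nonempty spaces are routinely empty, and the monodromy description of locally constant sheaves you would be invoking is itself a theorem only in restricted generality. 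The paper explicitly warns that one should \emph{not} expect the pointwise equivalences to assemble into a natural transformation. Its workaround is strictly weaker input: for each single comparable pair $i\leq j$ in the filtered system presenting the skyscraper $x_*C$, it produces a commuting square of non-natural equivalences between $x^*\mathscr F_i\to x^*\mathscr F_j$ and $y^*\mathscr F_i\to y^*\mathscr F_j$, by observing that the set of points $z$ admitting such a square is clopen (both sheaves being locally constant) and hence contains the component of $x$. Lemma~\ref{lem:small-compare}(2) is then tailored to conclude $\colimit_I y^*\mathscr F_i\not\simeq 0$ from $\colimit_I x^*\mathscr F_i\simeq C\not\simeq 0$ using only these pairwise squares and compactness of the stalks, via the ``transition maps eventually factor through $0$'' criterion. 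To close your proof you should replace the natural-transformation step with this pairwise-square-plus-vanishing-criterion argument (or prove an analogue of Lemma~\ref{lem:small-compare} yourself); as written, the compact connected case --- the heart of the proposition --- is not established.
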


\subsection{Proof of the proposition}
The proof will use the following observation:
\begin{lem}\label{lem:small-compare}
  Let $\mathcal C$ be a compactly generated stable $\infty$-category,
  and let $\lbrace C_i\rbrace_{i\in I}$ and $\lbrace D_i\rbrace_{i\in I}$ be filtered systems in $\mathcal C$ indexed over the same poset $I$.
  \begin{enumerate}[label=(\arabic*)]
  \item Suppose that for each $i\in I$, there is some $j\geq i$ so that the transition map $C_i\to C_j$ factors through the zero object $*$.
    Then $\colimit_I C_i\simeq *$. If each $C_i$ is compact, then the converse holds.
  \item Suppose that for each comparable pair $i\leq j$ in $I$ there are horizontal equivalences making
  $$
  \begin{tikzcd}
    C_i\arrow[r,dashed,"\simeq"]
    \arrow[d] 
    & D_i\arrow[d] \\
    C_j\arrow[r,dashed,"\simeq"]
    & D_j
  \end{tikzcd}
  $$
  commute, where the vertical maps are the transition maps.
  If each $C_i$ is compact, then $\colimit_IC_i\simeq *$ if and only if $\colimit_ID_i\simeq *$.
  \end{enumerate}
\end{lem}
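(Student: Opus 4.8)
The plan is to reduce both parts to a single combinatorial criterion: for a filtered system $\lbrace C_i\rbrace$ of \emph{compact} objects of $\mathcal C$ one has $\colimit_I C_i\simeq *$ if and only if the transition maps are ``eventually null'', i.e. for every $i$ there is some $j\geq i$ with $C_i\to C_j$ nullhomotopic. In a stable $\infty$-category a nullhomotopic map is precisely one that factors through $*$, so this criterion is exactly the hypothesis appearing in (1), and it is manifestly invariant under replacing a system by an equivalent one in the sense of (2).

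For part (1), the ``if'' direction does not need compactness: if each transition $C_i\to C_j$ in a cofinal family factors through $*$, then for every compact $G\in\mathcal C$ the induced map $\operatorname{Map}(G,C_i)\to\operatorname{Map}(G,C_j)$ is nullhomotopic, so every class in $\pi_n\operatorname{Map}(G,C_i)$ dies at stage $j$; since $G$ is compact we have $\operatorname{Map}(G,\colimit_I C_i)\simeq\colimit_I\operatorname{Map}(G,C_i)$, whence $\pi_n\operatorname{Map}(G,\colimit_I C_i)\simeq\colimit_I\pi_n\operatorname{Map}(G,C_i)=0$ for all $n$, and so $\operatorname{Map}(G,\colimit_I C_i)\simeq *$. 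As the compact objects of $\mathcal C$ are jointly conservative, $\colimit_I C_i\simeq *$. For the ``only if'' direction, suppose each $C_i$ is compact and $\colimit_I C_i\simeq *$. Fix $i$; since $I$ is filtered, $I_{\geq i}\subseteq I$ is cofinal, so $\colimit_{j\geq i}C_j\simeq\colimit_I C_j\simeq *$, and compactness of $C_i$ gives
$$
\colimit_{j\geq i}\operatorname{Map}(C_i,C_j)\simeq\operatorname{Map}\bigl(C_i,\colimit_{j\geq i}C_j\bigr)\simeq\operatorname{Map}(C_i,*)\simeq *.
$$
In particular $\pi_0$ of the left-hand side vanishes, so the images of $\mathrm{id}_{C_i}$ and of the zero map in $\pi_0\operatorname{Map}(C_i,C_i)$ become equal in this filtered colimit; hence they already agree at some stage $j\geq i$, which says precisely that the transition map $C_i\to C_j$ is nullhomotopic, i.e. factors through $*$.

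For part (2), first note that applying the hypothesis to the (always comparable) pair $i\leq i$ yields an equivalence $C_i\simeq D_i$, so each $D_i$ is compact as well. Moreover, for any comparable pair $i\leq j$ the given commuting square with horizontal equivalences shows that $C_i\to C_j$ is nullhomotopic if and only if $D_i\to D_j$ is. Combining this with part (1) applied to both systems,
$$
\colimit_I C_i\simeq *\ \Longleftrightarrow\ (\text{$C$-transitions eventually null})\ \Longleftrightarrow\ (\text{$D$-transitions eventually null})\ \Longleftrightarrow\ \colimit_I D_i\simeq *,
$$
which is the claim.

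The main point requiring care is the ``only if'' direction of part (1): one must genuinely use compactness of $C_i$ both to commute $\operatorname{Map}(C_i,-)$ past the filtered colimit and to extract from the vanishing of $\pi_0$ a \emph{single} index $j$ witnessing nullity of the transition map, rather than merely an asymptotic statement. The other subtlety, in part (2), is that the equivalences $C_i\simeq D_i$ are not assumed to assemble into an equivalence of diagrams; this is exactly why the argument is routed through the pointwise ``eventually null'' criterion of part (1) instead of attempting to identify the two systems directly.
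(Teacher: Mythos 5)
Your proposal is correct and follows essentially the same route as the paper: the forward direction of (1) is proved by showing mapping spaces out of compact objects have vanishing homotopy groups in the colimit (the paper phrases the $n\geq 1$ case via $\Sigma^n D$ and $\pi_0$, you use $\pi_n$ directly, which amounts to the same thing), the converse uses compactness of $C_i$ and the fact that $\pi_0$ commutes with filtered colimits to witness nullity of a transition map at a finite stage, and (2) is reduced to the transition-map criterion of (1) exactly as in the paper.
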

\begin{proof}
  Note that (2) follows from (1), since the existence of such commutative squares implies that $\lbrace C_i\rbrace_I$ has the vanishing property for transition maps described in (1) if and only if $\lbrace D_i\rbrace_I$ has that property.
  
  For the first claim in (1), it suffices to show that $\operatorname{Map}_{\mathcal C}(D,\colimit_{i\in I}C_i)$ is contractible
  for each compact object $D\in\mathcal C^\omega$.
  For this, first observe that
  $$
  \pi_0\operatorname{Map}_{\mathcal C}(D,\colimit_{i\in I}C_i)\cong\colimit_{i\in I}\pi_0\operatorname{Map}(D,C_i)\cong *,
  $$
  since our assumption guarantees that any homotopy class $D\to C_i$ is identified $D\to *\to C_i$ after postcomposing with the transition map $C_i\to C_j$
  for sufficiently large $j\geq i$. Applying the same argument for the compact object $\Sigma^nD$, $n\geq 1$, we find that
  $$
  \pi_n\operatorname{Map}_{\mathcal C}(D,\colimit_{i\in I}C_i)\cong\pi_0\operatorname{Map}_{\mathcal C}(\Sigma^nD,\colimit_{i\in I}C_i)
  $$
  vanishes also.
  
  Assume now that each $C_i$ is compact and that $\colimit_I C_i\simeq *$. Then
  $$
  \colimit_{j\in I}\operatorname{Map}_{\mathcal C}(C_i,C_j)
  \simeq
  \operatorname{Map}_{\mathcal C}(C_i,\colimit_{j\in I} C_j),
  $$
  and since $\pi_0$ commutes with filtered colimits of anima, it follows that for sufficiently large $j\geq i$ the transition map $C_i\to C_j$ is homotopic to $C_i\to *\to C_j$.
\end{proof}
\begin{proof}[Proof of Proposition~\ref{prop:when-cpt}]
  `Sufficiency.' The $\infty$-category of sheaves of anima 
  $\Shv (X)$ is compactly generated by \cite[Prop~6.5.4.4]{HTT},
  and hence so is $\Shv(X,\mathcal C)\simeq\Shv (X)\otimes\mathcal C$ according to \cite[Lem~5.3.2.11]{HA}.
  
  `Necessity.' Let $x\in X$. We must show that if $y\in X$ lies in the same connected component as $X$, then $y=x$. For this, pick an object $C\not\simeq 0$ in $\mathcal C$ and
  let $x_*C$ denote the skyscraper sheaf at $x$ with value $C$. By assumption there is a filtered system $\lbrace\mathscr F_i\rbrace_{i\in I}$ of compact sheaves with $\colimit_I\mathscr F_i\simeq x_*C$.
  For each $i$, the fact that $\mathscr F_i$ is locally constant and that $x$ and $y$ lie in the same connected component means there is a non-canonical equivalence of stalks $x^*\mathscr F_i\simeq y^*\mathscr F_i$. One should not expect to find a system of such non-canonical equivalences assembling into a natural transformation, essentially because the neighborhoods on which the $\mathscr F_i$ are constant could get smaller and smaller as $i$ increases. Nevertheless, given a comparable pair $i\leq j$ in $I$, one can pick equivalences making the diagram
  \begin{equation}\label{eq:small-transf}
    \begin{tikzcd}
      x^*\mathscr F_i\arrow[r,dashed,"\simeq"] 
      \arrow[d]
      & y^*\mathscr F_i \arrow[d]\\
      x^*\mathscr F_j\arrow[r,dashed,"\simeq"] 
      & y^*\mathscr F_j
    \end{tikzcd}
  \end{equation}
  where the vertical maps are the transition maps.
  To see this, simply note that the set of $z\in Z$ for which there is a commutative diagram
  $$
  \begin{tikzcd}
    x^*\mathscr F_i\arrow[r,dashed,"\simeq"] 
    \arrow[d]
    & z^*\mathscr F_i \arrow[d]\\
    x^*\mathscr F_j\arrow[r,dashed,"\simeq"] 
    & z^*\mathscr F_j
  \end{tikzcd}
  $$
  is a clopen subset of $X$, since any point admits a neighborhood on which both $\mathscr F_i$ and $\mathscr F_j$ are constant.
  Since all of the $\mathscr F_i$ have compact stalks by Theorem~\ref{thm:cpt-sheaves}, it follows from Lemma~\ref{lem:small-compare} that the stalk $(x_*C)_y\simeq\colimit_Iy^*\mathscr F_i$ is nonzero. But $X$ is Hausdorff, so this implies that $y=x$ as desired.
\end{proof}
\begin{rmk}
  Lemma~\ref{lem:small-compare} is also true if $\mathcal C$ is any ordinary category,
  e.g. the category of abelian groups $\mathrm{Ab}$.
  It is illuminating to consider why the lemma holds in this concrete setting.
  Given a filtered system of abelian groups $\lbrace A_i\rbrace_{i\in I}$,
  the associated colimit can be described as the quotient of $\bigoplus_I A_i$
  by the subgroup consisting of elements $a\in A_i$ such that there is $j\geq i$
  for which the transition map $\varphi_{ij}\colon A_i\to A_j$ maps $a$ to zero.
  Clearly $\colimit_IA_i\cong 0$ is implied by the assumption that for every $i\in I$
  there is $j\geq i$ with $\varphi_{ij}\colon A_i\to A_j$ being zero.
  For the partial converse, assume now that each $A_i$ is a compact object of
  $\mathrm{Ab}$, i.e. a finitely generated abelian group,
  and that $\colimit_IA_i\cong 0$. Let $i\in I$ and pick a generating set $a_1,\dots ,a_n$ for $A_i$.
  Since $\colimit_IA_j\cong 0$, there is $j_1,\dots ,j_n\geq i$ with $\varphi_{ij_s}(a_s) = 0$ for each $s$.
  Using that $I$ is filtered, pick $j\in I$ so that $j\geq j_s$ for each $s$.
  Then $\varphi_{ij}(a_s) = \varphi_{j_sj}\varphi_{ij_s}(a_s) = 0$ for each $s$, and hence $\varphi_{ij} = 0$.
\end{rmk}
\subsection{Hausdorff schemes}
Unlike in point-set topology, compactly generated categories of sheaves are abundant in algebraic geometry.
Using results of Hochster~\cite{hochster1969prime}, Proposition~\ref{prop:when-cpt} can be interpreted as saying that $\Shv (X,\mathcal C)$ is only compactly generated when $X$ happens to come from algebraic geometry:
\begin{prop}
  Let $\mathcal C$ be a nontrivial compactly generated stable $\infty$-category,
  and let $X$ be a $\mathcal C$-hypercomplete locally compact Hausdorff space
  Then $\Shv (X,\mathcal C)$ is compactly generated if and only if $X$ is the underlying space of a scheme
\end{prop}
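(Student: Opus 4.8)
The plan is to reduce the statement to Proposition~\ref{prop:when-cpt} together with Hochster's topological characterization~\cite{hochster1969prime} of the underlying spaces of schemes. By Proposition~\ref{prop:when-cpt}, $\Shv(X,\mathcal C)$ is compactly generated if and only if $X$ is totally disconnected, so it suffices to prove the purely point-set assertion that a locally compact Hausdorff space $X$ is totally disconnected if and only if it is homeomorphic to the underlying space of a scheme. I will use two facts extracted from Hochster's work: the underlying spaces of schemes are exactly the \emph{locally spectral} spaces, i.e. those admitting an open cover by spectral spaces, and the underlying spaces of affine schemes are exactly the spectral spaces. I will also use the elementary observation that a quasi-compact Hausdorff space is spectral if and only if it is a Stone space (equivalently, a profinite set), since in a compact Hausdorff space the quasi-compact opens are precisely the clopen subsets, and a compact Hausdorff space with a basis of clopen sets is sober and $T_0$.

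For the `only if' direction, suppose $X$ is locally compact Hausdorff and totally disconnected. I would begin by recalling the classical fact that a locally compact Hausdorff space is totally disconnected if and only if it is zero-dimensional; combining zero-dimensionality with local compactness then produces a basis of $X$ consisting of compact open subsets. Each such basic open set $V$ is compact, Hausdorff and zero-dimensional, hence a Stone space, hence the underlying space of an affine scheme. Therefore $X$ admits an open cover by underlying spaces of affine schemes, i.e. $X$ is locally spectral, and so is the underlying space of a scheme. (Alternatively, and more concretely: the basis of compact opens makes the one-point compactification $X^+$ a Stone space, hence $\mathrm{Spec}$ of a commutative ring, and $X$ is an open subspace of $X^+$, hence the underlying space of an open subscheme of $\mathrm{Spec}$ of that ring.)

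For the converse, suppose $X$ is simultaneously locally compact Hausdorff and the underlying space of a scheme, so $X$ is locally spectral. Choose an open cover $\{U_\alpha\}$ of $X$ by spectral subspaces. Each $U_\alpha$ is open in the Hausdorff space $X$, hence Hausdorff, hence a spectral Hausdorff space, hence a Stone space; in particular each $U_\alpha$ has a basis of quasi-compact opens. The union of these bases over all $\alpha$ is a basis of $X$, and each such member $V$ is open in $X$ and, being quasi-compact in the Hausdorff space $X$, is compact and therefore closed — so $V$ is clopen in $X$. Thus $X$ is a $T_1$ space with a basis of clopen sets, so any two distinct points are separated by a clopen set and $X$ is totally disconnected. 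Proposition~\ref{prop:when-cpt} then finishes the proof.

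The only non-formal input is the point-set bookkeeping at the two ends: on one side, upgrading total disconnectedness to zero-dimensionality in the locally compact Hausdorff setting, so that Hochster's theorem—which is phrased in terms of bases of quasi-compact opens—can be brought to bear; and on the other side, the dual observation that quasi-compact opens arising from a locally spectral structure on a Hausdorff space are automatically clopen. Neither step is deep, but both are exactly what make the equivalence ``totally disconnected locally compact Hausdorff $\Leftrightarrow$ locally spectral and Hausdorff'' run cleanly in both directions.
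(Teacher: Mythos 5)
Your proof is correct and follows essentially the same route as the paper: reduce to Proposition~\ref{prop:when-cpt} and then identify the totally disconnected locally compact Hausdorff spaces with the underlying spaces of schemes via Hochster. The only real difference is bookkeeping: where you invoke the classical fact that totally disconnected locally compact Hausdorff spaces are zero-dimensional and use the ``locally spectral'' form of Hochster's theorem together with the identification of Hausdorff spectral spaces with Stone spaces, the paper proves the point-set step directly by a finite-intersection-property argument and quotes the Hausdorff case of Hochster's Theorem~9 in the form ``basis of compact opens $\iff$ underlying space of a scheme.''
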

Indeed, a locally compact Hausdorff space is totally disconnected if and only if it admits a basis of compact open sets if and only if it is the underlying space of a scheme. The second equivalence is the Hausdorff case of \cite[Thm~9]{hochster1969prime}. 
  
  For the first equivalence, note in one direction that if $X$ admits a basis of compact open sets, then every $x\in X$ has $\lbrace x\rbrace = \bigcap_{U\ni x} U$, with $U$ ranging over compact open neighborhoods of $x$. Since each compact open neighborhood is clopen, we thus have that $\lbrace x\rbrace$ is a quasi-component in $X$, and hence that $X$ is totally disconnected.
  
  For the other direction, we must show that for every $x\in X$ and every open neighborhood $V\ni x$, there is a compact open $W$ with $x\in W\subseteq V$. Since $X$ is locally compact, we may assume that $V$ is precompact. By assumption $\lbrace x\rbrace = \bigcap_{U\ni x}U$, with $U$ ranging over clopen neighborhoods of $x$. Since each of these $U$ is in particular closed, we have that each $U\cap\partial\overline V$ is compact. By the finite intersection property, it therefore follows from $\bigcap_{U\ni x} U\cap \partial\overline{V} = \varnothing$ that for small enough clopen $U\ni x$, $U\cap\partial\overline{V} = \varnothing$. Hence $U\cap\overline V = U\cap V$ is a compact open neighborhood of $x$ contained in $V$, as desired.

  \subsection{When is $\Shv (X)$ compactly generated?}
Proposition~\ref{prop:when-cpt} says that the $\infty$-category of sheaves on $X$ with coefficients in a stable $\infty$-category is rarely compactly generated when $X$ is a locally compact Hausdorff space.
If we had asked the same question `without coefficients,'
this would have been an easier observation:
\begin{prop}
  Let $X$ be a quasi-separated\footnote{Recall that a topological space $X$ is said to be \emph{quasi-separated} if for any pair of compact open subsets $U,V\subseteq X$, the intersection $U\cap V$ is again compact. Note that all Hausdorff spaces are quasi-separated.} topological space.
  The $\infty$-topos $\Shv (X)$ of sheaves of anima on $X$ is compactly generated if and only if the sobrification of $X$ is the underlying space of a scheme.
\end{prop}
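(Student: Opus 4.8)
The plan is to reduce the statement to a point-set fact about $X$ and then invoke Hochster's theorem. Since $\Shv(X)$ depends only on the frame $\mathrm{Open}(X)$, and since quasi-separatedness---the assertion that the intersection of two quasi-compact opens is again quasi-compact---is a property of this frame, both are unchanged upon passing to the sobrification, so I may replace $X$ by its sobrification and assume henceforth that $X$ is sober and quasi-separated. By Hochster \cite[Thm~9]{hochster1969prime}, the condition that $X$ be the underlying space of a scheme is then equivalent to $X$ being \emph{locally spectral} (admitting an open cover by spectral spaces, where a spectral space is a quasi-compact sober space with a basis of quasi-compact opens closed under finite intersections); and a short point-set argument shows that, for $X$ sober and quasi-separated, this is in turn equivalent to $X$ having a basis of quasi-compact opens---the nontrivial implication being that any quasi-compact open $U$ of such an $X$ is itself spectral, inheriting soberness, a basis of quasi-compact opens, and (from quasi-separatedness) closure of its quasi-compact opens under finite intersection. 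So it suffices to prove: \emph{for $X$ sober and quasi-separated, $\Shv(X)$ is compactly generated if and only if $X$ has a basis of quasi-compact opens.}

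For the ``if'' direction, suppose $X$ has a basis of quasi-compact opens; by the observation above $X$ is locally spectral, so it has a basis $\mathcal B$ of spectral (in particular, coherent) open subsets---take the quasi-compact opens lying inside the members of a spectral open cover. For $U\in\mathcal B$ with inclusion $j_U\colon U\hookrightarrow X$, the subterminal sheaf $h_U:=(j_U)_!1_U$ corresponding to $U$ is compact in $\Shv(X)$: one has $\operatorname{Map}_{\Shv(X)}(h_U,-)\simeq\Gamma(U,-)\circ j_U^*$, the pullback $j_U^*$ preserves all colimits, and $\Gamma\colon\Shv(U)\to\mathcal S$ preserves filtered colimits since $U$ is a coherent topological space. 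As the $h_U$ for $U$ ranging over a basis generate $\Shv(X)$ under colimits, they constitute a set of compact generators, so $\Shv(X)$ is compactly generated (cf.\ \cite[Prop~6.5.4.4]{HTT}).

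For the ``only if'' direction, fix a set $\mathcal G$ of compact generators of $\Shv(X)$ and let $\tau_{\leq -1}\colon\Shv(X)\to\Shv(X)_{\leq -1}\simeq\mathrm{Open}(X)$ be the subterminal truncation, which is left adjoint to the inclusion $\iota$ of subterminal sheaves. The functor $\iota$ preserves filtered colimits, because a filtered colimit of subterminal sheaves $h_{U_i}$ is computed as $h_{\bigcup_iU_i}$ (the presheaf-level colimit is already subterminal, and sheafifies to the join of the $U_i$). For $G\in\mathcal G$, put $U_G:=\tau_{\leq -1}(G)$, the open on which $G$ is supported. Then: since $\tau_{\leq -1}$ preserves colimits, applying it to an expression of an arbitrary subterminal sheaf $h_V$ as a colimit of objects of $\mathcal G$ shows $V$ is a union of opens of the form $U_G$, so $\{U_G:G\in\mathcal G\}$ is a basis of $X$; and each $U_G$ is quasi-compact, for if $\{V_i\}$ is a directed family of opens with $U_G\subseteq\bigcup_iV_i$, then the composite $G\to h_{U_G}\to h_{\bigcup_iV_i}\simeq\colimit_ih_{V_i}$ factors through some $h_{V_i}$ by compactness of $G$, and such a factorization exists only when $U_G\subseteq V_i$ (by adjunction, $\operatorname{Map}_{\Shv(X)}(G,h_{V_i})$ is nonempty exactly when $U_G\subseteq V_i$). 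Hence $\{U_G\}$ is a basis of quasi-compact opens, as required.

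The one step that is neither formal nor point-set topology is the input invoked in the ``if'' direction: that the global sections functor of the (in general non-hypercomplete) $\infty$-topos of sheaves on a coherent topological space preserves filtered colimits, equivalently that its terminal object is compact. I would obtain this from Lurie's theory of coherent $\infty$-topoi \cite[\S 7.3]{HTT} together with \cite[Prop~6.5.4.4]{HTT}, the essential simplification being that on a coherent site one may test the sheaf condition against \emph{finite} covers only, which controls the \v{C}ech limits that intervene. I expect this to be the main obstacle---or at least the place where care about non-hypercompleteness is required.
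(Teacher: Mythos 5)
Your proof is correct and follows essentially the same route as the paper: both directions are reduced, via Hochster's theorem and the quasi-separatedness hypothesis, to the statement that $\Shv(X)$ is compactly generated if and only if $X$ has a basis of quasi-compact open sets. The paper simply outsources the two categorical steps you carry out by hand --- the `if' direction is \cite[Thm~7.2.3.6]{HTT}, and the `only if' direction follows by applying \cite[Cor~5.5.7.4]{HTT} to conclude that the frame $\tau_{\leq -1}\Shv(X)\simeq\operatorname{Open}(X)$ is compactly generated --- so the coherence input you rightly flag as the main non-formal obstacle (that global sections on a coherent space preserves filtered colimits, despite non-hypercompleteness) is absorbed into a citation of Lurie rather than proved.
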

\begin{proof}
  One direction is \cite[Thm~7.2.3.6]{HTT}. For the other direction, assume that $\Shv (X)$ is compactly generated. 
  Then so is the frame 
  $\mathcal U\simeq\tau_{\leq -1}\Shv (X)$ of open subsets of $X$ by \cite[Cor~5.5.7.4]{HTT}. But this means that $X$ admits a basis of compact open sets,
  and hence the sobrification of $X$ is the underlying space of a scheme according to \cite[Thm~9]{hochster1969prime}.
\end{proof}
\appendix
\section{Descent for maps with local sections} 
In this short appendix, we prove a descent lemma that was used in the proof of Theorem~\ref{thm:cpt-sheaves},
which is an immediate generalization of \cite[Cor~4.1.6]{techniques}.

\medskip

Let $\mathcal C$ be a compactly generated $\infty$-category and let $f\colon X\to Y$ be a continuous map of topological spaces.    
Recall that the \emph{\v Cech nerve} of $f$ is the augmented simplicial topological space $X_\bullet$ with $X_{-1} = Y$ and $p$-simplices
$$
X_p = \underbrace{X\times_Y \dots\times_Y X}_{p\text{ times}}
$$
for $p\geq 0$, with face maps given by projections and degeneracy maps given in the obvious way. More formally, if $\Delta_+$ is the category of finite (possibly empty) ordinals and $\mathcal T\mathrm{op}$ is the category of topological spaces, then $X_\bullet\colon \Delta^{\text{op}}_+\to\mathcal T\mathrm{op}$ is
defined by right Kan extending $(f\colon X\to Y)\colon \Delta_{+,\leq 0}^{\text{op}}\to\mathcal T\mathrm{op}$ along the inclusion functor $\Delta_{+,\leq 0}^{\text{op}}\subset \Delta^{\text{op}}_+$.

Letting $\Shv^*({-},\mathcal C)$ denote the contravariant functor from $\mathcal T\mathrm{op}$ to $\inftyCats$ given informally by $X\mapsto \Shv (X,\mathcal C)$ on objects and $f\mapsto f^*$ on morphisms, we then have the following useful definition:
\begin{defn}
  The function $f$ is of \emph{$\mathcal C$-descent type} if the canonical functor
  $$
  \Shv (X,\mathcal C)\to\limit_{\Delta}\Shv^*(X_\bullet ,\mathcal C)
  $$
  is an equivalence.
\end{defn}
Let us say that $f$ \emph{admits local sections} if for every $x\in X$, there is an open set $U\ni x$ such that the restriction $f\colon f^{-1}(U)\to U$ admits a section.
\begin{prop}
  If $f$ admits local sections, then $f$ is of $\mathcal C$-descent type.
\end{prop}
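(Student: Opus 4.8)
The strategy is to reduce the statement about the Čech nerve to the known case of covers by open sets, for which descent is classical. Since $f$ admits local sections, we can choose an open cover $\{U_\alpha\}$ of $X$ together with sections $s_\alpha\colon U_\alpha\to f^{-1}(U_\alpha)$ of $f$. Let $V = \coprod_\alpha U_\alpha$ and let $g\colon V\to X$ be the obvious map, which is a local homeomorphism onto an open cover and in particular is of $\mathcal C$-descent type by the standard open-cover descent for sheaves (this is exactly where \cite[Cor~4.1.6]{techniques} or its cited analogue enters). The sections $s_\alpha$ assemble into a map $s\colon V\to X$ over $Y$, i.e. a map of spaces-over-$Y$ from $g\colon V\to X$ to $f\colon X\to Y$. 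The point is that $g$ factors through $f$ up to the structure maps to $Y$, so the augmented simplicial space $X_\bullet$ receives a map from the Čech nerve $V_\bullet$ of $g$ (over $Y$).

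**Key steps.** First I would set up the comparison of augmented simplicial spaces: the section $s$ induces a simplicial map $V_\bullet\to X_\bullet$ over the common augmentation to $Y$, where $V_p = V\times_Y\cdots\times_Y V$ and $X_p = X\times_Y\cdots\times_Y X$. Applying $\Shv^*(-,\mathcal C)$ and taking limits over $\Delta$, we obtain a commuting triangle
\[
\begin{tikzcd}[ampersand replacement=\&]
\Shv(Y,\mathcal C)\arrow[r]\arrow[dr]
\& \limit_\Delta\Shv^*(X_\bullet,\mathcal C)\arrow[d]\\
\& \limit_\Delta\Shv^*(V_\bullet,\mathcal C).
\end{tikzcd}
\]
Wait—this is not quite what we want; the assertion concerns $f\colon X\to Y$, so the relevant statement is that $\Shv(Y,\mathcal C)\to\limit_\Delta\Shv^*(X_\bullet,\mathcal C)$ is an equivalence, where $X_{-1}=Y$. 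The diagonal map is the composite, and it is an equivalence because $g\colon V\to X\to Y$, being $g$ followed by $f$, restricted appropriately is again a map admitting local sections—indeed $V\to Y$ is itself an open cover composed with $f$; more directly, the Čech nerve of $V\to Y$ computes $\Shv(Y,\mathcal C)$ because $V\to X$ is an open cover (so $\Shv(X,\mathcal C)\simeq\limit\Shv^*(V_\bullet^{/X})$) and then one iterates. The cleanest formulation: the map $V\to Y$ and the map $X\to Y$ both refine to a common cover, and a general descent lemma says that if $V\to X$ is of descent type and $V\to Y$ is of descent type, then so is $X\to Y$ — this is the standard two-out-of-three for descent morphisms, available once one knows $\Shv^*(-,\mathcal C)$ sends Čech nerves of descent-type maps to limit diagrams and is suitably functorial. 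So the real content reduces to: (a) $V\to X$ is of $\mathcal C$-descent type (open cover case, cited), (b) $V\to Y$ is of $\mathcal C$-descent type, and (c) the two-out-of-three property. For (b), note $V\to Y$ is the composite $V\to X\to Y$; the section $s$ shows $X\to Y$ admits a section after pulling back along the cover $V\to X$, which is precisely the input needed.

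**Main obstacle.** The technical heart is verifying the two-out-of-three property for morphisms of $\mathcal C$-descent type, i.e. that if $g\colon V\to X$ and the composite $f\circ g\colon V\to Y$ are both of descent type then $f$ is. The clean way is to observe that $V_\bullet$ (Čech nerve over $Y$) is the Čech nerve of $V_p\to X_p$ assembled over the simplicial object $X_\bullet$: for each $p$, the map $V\times_Y\cdots\times_Y V\to X\times_Y\cdots\times_Y X$ is a base change of $g\colon V\to X$ (along the $p$-fold fiber product projections), hence again an open cover, hence of $\mathcal C$-descent type by the cited open-cover result. Therefore $\Shv^*(X_p,\mathcal C)\simeq\limit_{\Delta}\Shv^*((V_p)_\bullet,\mathcal C)$, and feeding this into the bisimplicial limit and using that limits commute, one gets $\limit_\Delta\Shv^*(X_\bullet,\mathcal C)\simeq\limit_{\Delta\times\Delta}\Shv^*(V_{\bullet\bullet},\mathcal C)\simeq\limit_\Delta\Shv^*(V_\bullet^{/Y},\mathcal C)$ via the diagonal (Bousfield–Kan / cofinality of the diagonal $\Delta\to\Delta\times\Delta$). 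Since the right-hand side is $\Shv(Y,\mathcal C)$ by hypothesis on $f\circ g$, and since the composite $\Shv(Y,\mathcal C)\to\limit_\Delta\Shv^*(X_\bullet,\mathcal C)\to\limit_\Delta\Shv^*(V_\bullet^{/Y},\mathcal C)$ is this equivalence, a retract/two-out-of-three argument finishes it — but to run that last step one needs the composite to literally be the canonical comparison, which requires care with the coherences of the bisimplicial diagram. Managing those coherences (rather than any conceptual difficulty) is where the bulk of the work lies; everything else is a formal consequence of open-cover descent plus cofinality of the diagonal.
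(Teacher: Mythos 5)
Your strategy (factor an open cover of the base through $f$ and invoke a cancellation property for descent morphisms) is workable in principle, but the execution breaks at the step where you claim everything becomes formal. First, the cover $\{U_\alpha\}$ carrying the sections lives on the \emph{base} $Y$, and the map $s\colon V=\coprod_\alpha U_\alpha\to X$ assembled from the sections $s_\alpha$ is \emph{not} ``a local homeomorphism onto an open cover'' of $X$: its image is the union of the graphs of the $s_\alpha$, which in general is neither open in $X$ nor all of $X$ (take $Y=\ast$ and $s$ the inclusion of a single point of $X$). So your input (a) fails, and with it the claim that the levelwise maps $V\times_Y\cdots\times_Y V\to X\times_Y\cdots\times_Y X$ are base changes of an open cover; the bisimplicial/cofinality argument you sketch therefore does not get off the ground. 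The cancellation statement you actually need is the one-sided one --- if the composite $f\circ s$ is of descent type then so is $f$, with no hypothesis on $s$ --- and its standard proof runs through the bisimplicial object $(p,q)\mapsto X_p\times_Y V_q$. There, the $p$-direction (for fixed $q$) is handled by observing that the augmented \v{C}ech nerve of $X\times_Y V_q\to V_q$ admits a section, hence is \emph{split}, hence becomes a limit diagram after applying $\Shv^*({-},\mathcal C)$. That splitting observation is the real content of the proposition, and it is nowhere in your write-up; without it the retract argument you end with only shows that $\Shv(Y,\mathcal C)$ is a retract of $\limit_\Delta\Shv^*(X_\bullet,\mathcal C)$, not that the comparison functor is an equivalence.

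The paper's proof avoids all of this bookkeeping: ordinary \v{C}ech descent for the open cover $\{U_\alpha\}$ of the base reduces immediately to the case where $f$ admits a \emph{global} section $\varepsilon$, and then $\varepsilon$ equips the augmented \v{C}ech nerve $X_\bullet$ with extra degeneracies, so that $\Shv^*(X_\bullet,\mathcal C)$ is a split coaugmented cosimplicial $\infty$-category and hence a limit diagram by \cite[Lem~6.1.3.16]{HTT}. If you prefer to keep your global formulation, you must both correct the identification of which maps are covers and supply the split-simplicial-object argument as an ingredient --- at which point the reduction to a global section is strictly simpler.
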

\begin{proof}
  By ordinary \v Cech descent, we may assume that $f$ admits a section globally on $X$, after possibly passing to an open cover of $X$ on which this is true. Let $\varepsilon\colon Y\to X$ be a choice of such a section. The section $\varepsilon$ allows us to endow the \v Cech nerve $X_\bullet$ with the structure of a split augmented simplicial space, by defining the extra degeneracies $h_i\colon X_p\to X_{p+1}$ by
  $$
  h_i(x_0,\dots ,x_p) = (x_0,\dots ,x_{i-1},\varepsilon (y),x_i,\dots ,x_p)
  $$
  where $y = f(x_0) = \dots = f(x_p)$. It then follows that the split coaugmented cosimplicial $\infty$-category $\Shv^*(X_\bullet ,\mathcal C)$ is a limit diagram by \cite[Lem~6.1.3.16]{HTT}
\end{proof}
\begin{cor}\label{cor:descent-lem}
  Let $\lbrace A_i\rbrace_{i\in I}$ be a collection of subsets of $X$ such that $X = \bigcup_I A_i^{\mathrm o}$, where $A_i^{\mathrm o}$ is the interior of $A_i$. Then the canonical map $\coprod_I A_i\to X$ is of $\mathcal C$-descent type.
\end{cor}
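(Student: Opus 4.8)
The plan is to deduce this directly from the preceding proposition: it suffices to check that the canonical map $f\colon\coprod_I A_i\to X$ admits local sections, and then to invoke that every such map is of $\mathcal C$-descent type.

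To verify that $f$ admits local sections, I would fix a point $x\in X$ and use the hypothesis $X=\bigcup_I A_i^{\mathrm o}$ to choose an index $i\in I$ with $x\in A_i^{\mathrm o}$. Taking $U=A_i^{\mathrm o}$, which is an open neighborhood of $x$, the composite of the subspace inclusion $A_i^{\mathrm o}\hookrightarrow A_i$ with the coproduct inclusion $A_i\hookrightarrow\coprod_I A_i$ is a continuous map $s\colon U\to\coprod_I A_i$ --- continuity being immediate since $\coprod_I A_i$ carries the disjoint-union topology and each $A_i$ the subspace topology from $X$. Since $f|_{A_i}$ is the inclusion $A_i\hookrightarrow X$, we have $s(U)\subseteq A_i\cap U\subseteq f^{-1}(U)$ and $f\circ s=\mathrm{id}_U$, so $s$ is the desired section of $f\colon f^{-1}(U)\to U$. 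As $x$ was arbitrary, $f$ admits local sections.

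I do not expect any real obstacle here; the only point requiring (trivial) care is the continuity of the evident set-theoretic section, which follows at once from the definitions of the subspace and disjoint-union topologies. In effect this is just the stated generalization of \cite[Cor~4.1.6]{techniques} from open covers to covers by interiors of (not necessarily open) subsets.
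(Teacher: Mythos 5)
Your proof is correct and is essentially identical to the paper's: both verify that $\coprod_I A_i\to X$ admits local sections by using the composite $A_i^{\mathrm o}\hookrightarrow A_i\hookrightarrow\coprod_I A_i$ over the open set $A_i^{\mathrm o}$, and then invoke the preceding proposition. The extra remarks about continuity are fine but not needed beyond what the paper already takes as evident.
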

\begin{proof}
  The canonical map $\coprod_IA_i\to X$ admits a section on $A_j^{\mathrm o}$ given by $A_j^{\mathrm o}\hookrightarrow A_j\to\coprod_IA_i$, where the second map is the canonical injection.
\end{proof}
\bibliographystyle{alpha}
\bibliography{sources}
\end{document}